\documentclass[11pt,reqno]{amsart}
\usepackage[T1]{fontenc}
\usepackage{newtxtext,newtxmath}
\usepackage[margin=1in]{geometry}
\usepackage{microtype}
\usepackage{enumitem}
\usepackage{needspace}
\usepackage{xcolor}
\usepackage[colorlinks=true,linkcolor=black,citecolor=black,urlcolor=blue!45!black]{hyperref}
\hypersetup{pdftitle={A finitely generated counterexample to extension of Caprace's example},
  pdfsubject={A finitely generated linear group that is not inner amenable and not properly proximal}}
\newtheorem{theorem}{Theorem}[section]
\newtheorem{lemma}[theorem]{Lemma}
\newtheorem{proposition}[theorem]{Proposition}

\theoremstyle{definition}

\theoremstyle{remark}
\newtheorem{remark}[theorem]{Remark}
\newcommand{\F}{\mathbb F}
\newcommand{\Z}{\mathbb Z}

\newcommand{\SL}{\mathrm{SL}}
\newcommand{\GL}{\mathrm{GL}}

\newcommand{\one}{\mathbf 1}

\newcommand{\FC}{\operatorname{FC}}

\setlist[enumerate]{label=\textup{(\roman*)},leftmargin=2.2em,itemsep=2pt}
\title[A finitely generated extension of Caprace's example]{A finitely generated ICC non property (T) group that is neither inner amenable nor properly proximal}

\author{Xin Ma}

	\address{X. Ma:  Institute for Advanced Study in Mathematics, Harbin Institute of Technology, Harbin, China, 150001}
  \email{xma17@hit.edu.cn}

\date{24 September 2026}
\subjclass[2020]{Primary 20H25; Secondary 43A07, 20F65.}
\keywords{Proper proximality, inner amenability, linear groups, Kazhdan's property (T)}

\begin{document}
\begin{abstract}
This note record a finitely generated extension of Caprace's example of a non-inner-amenable group is also not properly proximal, which answers the original question by Boutonnet, Ioana and Peterson.  The extension is an explicit ICC subgroup of $\GL_4(\F_2(t))$, which has no property \textup{(T)}.  Failure of proper proximality follows from the work of Ishan--Peterson--Ruth and the passage of proper proximality to co-amenable subgroups. Non-inner-amenability follows from a property-\textup{(T)} subgroup and an elementary centralizer computation. 
\end{abstract}
\maketitle

\section{Introduction and statement}

Proper proximality was introduced by Boutonnet, Ioana, and Peterson in \cite{BIP21}. Properly proximal groups are not inner amenable \cite[Proposition~4.11]{BIP21}. The converse was asked, in particular for \textit{finitely generated} linear groups, in \cite[Question~1(c)]{BIP21}. It was shown in \cite{W} a ICC Property \textup{(T)} counterexample to this converse direction. We record a different construction in this note.

A construction due to Caprace, recorded in \cite[Section~5.C]{DTDW20}, gives a countable linear group
\begin{equation}\label{eq:caprace}
 \Gamma=\F_2[t,t^{-1}]^3\rtimes\SL_3(\F_2[t^{-1}])
\end{equation}
which is not inner amenable and is measure equivalent to a group with infinite center. Ishan, Peterson, and Ruth proved that proper proximality is invariant under measure equivalence and explicitly observed that $\Gamma$ is not properly proximal \cite[Theorem~1.3 and the subsequent discussion]{IPR24}. The group $\Gamma$ is not finitely generated; see Remark~\ref{rem:not-fg} below. To the best knowledge of the author (who also feel surprised), it seems that there exists no finitely generated counterexample in the literature to the original question in \cite[Question~1(c)]{BIP21} so far.

Adjoining a scalar dilation makes the group in \eqref{eq:caprace} finitely generated. The point requiring verification is that the enlarged group is still not inner amenable. We give that verification explicitly.

\begin{theorem}\label{thm:main}
Let $t$ be an indeterminate, and put
\[
 R=\F_2[t^{-1}],\qquad L=\F_2[t,t^{-1}],\qquad K=\F_2(t),
 \qquad Q=\SL_3(R).
\]
Then the group
\begin{equation}\label{eq:G}
 G=\left\{
 \begin{pmatrix}t^kA&v\\0&1\end{pmatrix}
 : A\in Q,\ v\in L^3,\ k\in\Z
 \right\}\leq\GL_4(K)
\end{equation}
is finitely generated, not of property \textup{(T)}, ICC group that is not inner amenable, and is not properly proximal.
\end{theorem}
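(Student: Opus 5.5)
The plan is to write $G=L^3\rtimes H$ with $H=\langle t\rangle\times Q\cong\Z\times\SL_3(R)$. Here $H$ acts on $L^3$ by $(k,A)\cdot v=t^kAv$, and $\Gamma=L^3\rtimes Q$ is the index-$\Z$ normal subgroup with $G/\Gamma\cong\Z$. I will treat the five properties separately.

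\emph{Finite generation.} $Q=\SL_3(\F_2[t^{-1}])$ is finitely generated, since $\SL_n$ over a polynomial ring over a finite field is finitely generated for $n\ge3$ (Nagao/Suslin-type results; indeed it is generated by elementary matrices). As an $\F_2[Q\times\langle t\rangle]$-module, $L^3$ is generated by $e_1$. Applying elementary matrices of $Q$ to $e_1$ gives all $R e_j$, hence all of $R^3$. Then $t^k$ with $k\ge0$ gives $t^kR^3$, and $L^3=\bigcup_k t^kR^3$. So $G$ is generated by generators of $Q$, the dilation $t$, and the translation $e_1$.

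\emph{No (T) and non-proper-proximality.} $G$ surjects onto $\Z$, so it does not have (T). For non-proper-proximality I use that $\Gamma$ is co-amenable in $G$, as $G/\Gamma\cong\Z$ is amenable. Proper proximality passes to co-amenable subgroups, so if $G$ were properly proximal then so would be $\Gamma$. This contradicts \cite{IPR24}.

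\emph{ICC.} Take $g=(v,(k,A))\neq 1$ and split into cases.
\begin{enumerate}
\item If $(k,A)\ne1$, conjugating by translations $w$ changes $v$ by $w-t^kAw$. The map $I-t^kA$ is nonzero on $L^3$, so its image is an infinite subgroup, which gives infinitely many conjugates.
\item If $g=v\neq0$ is a pure translation, its conjugates include $t^kAv$. These are infinite because the orbit under $t^\Z$ of a nonzero vector is infinite.
\end{enumerate}

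\emph{Non-inner-amenability (main obstacle).} I plan to adapt Caprace's argument. Suppose $m$ is a conjugation-invariant mean on $G\setminus\{1\}$.

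First I pick a property (T) subgroup $T\le G$ and use it to control $m$. For $T$ I take $\SL_3(\F_2[t^{-1}])$ restricted to the finitely generated relative-(T) pair, or better $\F_2[t^{-1}]^3\rtimes\SL_3(\F_2)$-type pieces. Concretely, $\SL_3(\F_2[t^{-1}])$ itself has (T), being a lattice in $\SL_3(\F_2((t^{-1})))$. The standard argument is that a $T$-invariant mean must concentrate on the set of elements with finite $T$-conjugacy class; via unitary representations, a (T) group invariant mean on a $T$-set lives on the finite orbits. So $m$ lives on $\{g\ne1: |g^T|<\infty\}$, which essentially means elements whose centralizer has finite index in $Q$.

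Next comes the centralizer computation. The translations fixed by a finite-index subgroup of $\SL_3(R)$ are only $0$. For non-translation elements $(v,(k,A))$, a finite-index subgroup of $Q$ must commute with $t^kA$ modulo translations, forcing $A$ to be central, hence $A=1$ since the center of $\SL_3$ over $\F_2$ is trivial. Finite-index stabilization of $v$ then requires $(I-B)w=\ldots$, and an explicit check shows this forces $v=0$ when $k\ne0$. So the only candidates are $t^k$, $k\neq0$, possibly times a few translations.

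Finally, conjugation by translations $w$ moves $t^k$ to $(w-t^kw)\,t^k$. These form an infinite family of disjoint translates under an amenable-free mechanism. Invariance under the translation group $L^3$, combined with invariance under $Q$ for the translation parts, should show the mass is zero. I expect making this last step rigorous to be the crux, and I would handle it by pushing forward $m$ to the $L^3$-coordinate. There $Q$ acts with (T), no finite orbits besides $0$, and pure $t^k$ has mass forced zero by conjugation with $w$.
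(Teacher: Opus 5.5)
Your arguments for finite generation, the failure of (T), and non-proper-proximality follow the paper. Your direct ICC argument is correct: when $t^kA\neq I$, conjugate by translations $\tau_w$, since $\tau_w g\tau_w^{-1}$ has translation part $v+(I-t^kA)w$; when $g=\tau_v$, conjugate by powers of $s$. This route is independent of the mean machinery, whereas the paper deduces ICC from the stronger fact that every conjugation-invariant mean is $\delta_1$. The gap is in non-inner-amenability. You stop exactly where the argument must close, call this step the crux, and offer only a vague mechanism (``infinite family of disjoint translates'', ``amenable-free mechanism''). Your $\FC$ computation is also left hedged (``$(I-B)w=\ldots$'', ``possibly times a few translations''). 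As written, that part is incomplete.

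Both points close quickly, and the fix is exactly the paper's route. Conjugation by $d(B)$ sends $\tau_v d(A)s^k$ to $\tau_{Bv}d(BAB^{-1})s^k$, because $d(B)$ commutes with $s$. So a finite $Q$-orbit means $v$ is fixed by, and $A$ commutes with, a finite-index subgroup of $Q$. That subgroup contains some $x_{ij}(r_{ij})$ with $r_{ij}\neq0$ for each $i\neq j$. This gives $v_j=0$ and $E_{ij}A=AE_{ij}$, hence $v=0$ and $A=I$ for every $k$. Thus $\FC_G(Q)=\langle s\rangle$ exactly, with no extra translations.

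By the (T) lemma, a conjugation-invariant mean $m$ on $G\setminus\{1\}$ then satisfies $m(S\setminus\{1\})=1$, where $S=\langle s\rangle$. Now a single conjugation suffices: $\tau_{e_1}s^k\tau_{e_1}^{-1}=\tau_{(1-t^k)e_1}s^k\notin S$ for $k\neq0$. So $S\setminus\{1\}$ and its $\tau_{e_1}$-conjugate are disjoint sets, each of measure $1$, which is absurd. No infinite family of $w$'s and no second use of (T) is needed.

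Your push-forward idea can in fact be made rigorous, and it even bypasses the centralizer of $A$. Applying the (T) lemma to the $Q$-equivariant map $g\mapsto v$ puts $m$ on $\{v=0\}=d(Q)S$. Intersecting this set with its conjugates by $\tau_{e_1},\tau_{e_2},\tau_{e_3}$ leaves $\{d(A)s^k:t^kA=I\}=\{1\}$, which gives the same contradiction.

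There are two small slips. First, $Q$ is a lattice in $\SL_3(\F_2((t)))$, not in $\SL_3(\F_2((t^{-1})))$; in the latter, $\F_2[t^{-1}]$ lies in the compact ring $\F_2[[t^{-1}]]$ and is not discrete. Second, $\F_2[t^{-1}]^3\rtimes\SL_3(\F_2)$ is infinite and amenable, hence lacks (T), so that candidate should be dropped.
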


\section{Means and the inputs from earlier work}

A \emph{mean} on a set $X$ is a positive unital linear functional on $\ell^\infty(X,\mathbb R)$; we write $m(E)=m(\one_E)$ for $E\subseteq X$. In this note, \emph{atomless} means that every singleton has measure zero. A group is \emph{inner amenable} if it admits an atomless mean invariant under conjugation.

For a countably infinite group $H$, one characterization of proper proximality is that the algebra
\[
 \bigl(\ell^\infty(H)/c_0(H)\bigr)^{H_r}
\]
has no state invariant under left translation \cite[Theorem~4.3(iii)]{BIP21}. Here $H_r$ denotes the right translation action. We use the following established facts.

\begin{enumerate}
\item\label{input:T} $Q=\SL_3(\F_2[t^{-1}])$ has Kazhdan's property~\textup{(T)}. This fact is used in \cite[Section~5.C]{DTDW20}; it also follows from the lattice realization in $\SL_3(\F_2((t)))$ and the standard property-\textup{(T)} theorems for higher-rank groups and their lattices; see \cite[Theorems~1.4.15 and~1.7.1]{BHV08}.
\item\label{input:caprace} The group $\Gamma$ in \eqref{eq:caprace} is not properly proximal \cite[discussion following Theorem~1.3]{IPR24}.
\item\label{input:coamenable} If $H$ is properly proximal and $\Lambda\leq H$ is co-amenable, then $\Lambda$ is properly proximal \cite[Proposition~4.10(2)]{BIP21}. Recall that $\Lambda\leq H$ is \emph{co-amenable} if the left action on $H/\Lambda$ admits an invariant mean. In particular, a normal subgroup with amenable quotient is co-amenable.
\end{enumerate}

We will also use a standard elementary consequence of property~\textup{(T)}, for which we include the proof.

\begin{lemma}\label{lem:T}
Let $H$ be a discrete group with property~\textup{(T)}, acting on a set $X$. Set
\[
 X_{\mathrm{fin}}=\{x\in X:|H\cdot x|<\infty\}.
\]
Every $H$-invariant mean $m$ on $X$ satisfies $m(X_{\mathrm{fin}})=1$.
\end{lemma}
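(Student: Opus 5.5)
Suppose $m$ is an $H$-invariant mean on $X$ with $m(X_{\mathrm{fin}})<1$. Let $Y=X\setminus X_{\mathrm{fin}}$. Since both $X_{\mathrm{fin}}$ and $Y$ are $H$-invariant, we get a new mean $m_Y(f)=m(f\one_Y)/m(Y)$. It is an $H$-invariant mean on $Y$, and every $H$-orbit in $Y$ is infinite. The plan is to contradict property~(T) by producing almost invariant vectors in $\ell^2(Y)$, which has no nonzero invariant vectors.

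First, $\ell^2(Y)$ has no nonzero $H$-invariant vector. Such a vector would be constant on orbits. Since it is square-summable and each orbit is infinite, it must vanish.

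Second, the invariant mean gives almost invariant unit vectors in $\ell^2(Y)$. This is the standard Reiter/Day argument. The mean $m_Y$ is a weak$^*$-limit of finitely supported probability densities $\mu_i\in\ell^1(Y)$. Invariance of $m_Y$ gives $h\mu_i-\mu_i\to0$ weakly in $\ell^1(Y)$ for each $h\in H$. Fix a finite set $F\subseteq H$ and $\varepsilon>0$. Consider the convex set $\{(h\mu-\mu)_{h\in F}:\mu \text{ a finitely supported probability}\}\subseteq\ell^1(Y)^F$. Its weak closure contains $0$, so by Hahn--Banach (Mazur) its norm closure does too. This yields $\mu$ with $\|h\mu-\mu\|_1<\varepsilon$ for all $h\in F$. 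Then $\xi=\mu^{1/2}$ is a unit vector in $\ell^2(Y)$, and the inequality $\|\mu^{1/2}-\nu^{1/2}\|_2^2\le\|\mu-\nu\|_1$ gives $\|h\xi-\xi\|_2<\varepsilon^{1/2}$ for $h\in F$.

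Finally, property~(T) says that a unitary representation with almost invariant vectors has a nonzero invariant vector. Applying this to $\ell^2(Y)$ contradicts the first step, so $m(Y)=0$.

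The only step with real content is the passage from the weak convergence $h\mu_i-\mu_i\to0$ to norm-small differences via the convexity argument. It is routine but must be done simultaneously for the finite set $F$, which is why the product space $\ell^1(Y)^F$ is used. One should also note that the case $Y=\emptyset$, or $m(Y)=0$, is trivial.
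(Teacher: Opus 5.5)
Your proof is correct and follows the paper's route: restrict the mean to the union $Y$ of infinite orbits, get $\ell^1$-almost invariant probability vectors from a Hahn--Banach convexity argument in $\ell^1(Y)^F$, take square roots, and use property (T) to contradict the absence of nonzero invariant vectors in $\ell^2(Y)$. The only difference is cosmetic. The paper separates $0$ from the convex set directly, tests the separating functional on Dirac masses $\delta_y$, and then integrates against the mean, whereas you pass through weak$^*$-density of finitely supported probabilities and Mazur's theorem; these are two phrasings of the same Day argument.
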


\begin{proof}
Suppose $Y=X\setminus X_{\mathrm{fin}}$ has positive $m$-measure. Restricting and normalizing $m$ gives an $H$-invariant mean $\mu$ on $Y$.

For every finite $F\subseteq H$ and $\varepsilon>0$, there exists a vector $p\in \ell^1(Y)$ with norm $1$ such that $p(x)\geq 0$ for any $x\in Y$ and 
\[
 \|h\cdot p-p\|_1\leq \varepsilon.
\]
for any $h\in F$. To do so, first define $E=\oplus_{h\in F}\ell^1(Y)$, equipped with $\ell_1$ norm. Moreover, define
\[C=\{(h\cdot p-p)_{h\in F}: p \text{ is a finitely supported vector in }\ell_1(Y) \text{ such that }p(x)\geq 0 \text{ for any }x\in Y \text{ and } \|p\|_1=1\},\]
which is a convex set. Suppose $0\notin \overline{C}$. Then the Hahn--Banach separation theorem implies that there exists a $\Phi\in E^*$ such that $\Phi((h\cdot p-p)_{h\in F})>c$ for some $c>0$. This implies that there exists a $f_h\in \ell^\infty(Y)$ for $h\in F$ such that 
\[\Phi((h\cdot p-p)_{h\in F})=\sum_{h\in F}\sum_{y\in Y}f_h(y)(h\cdot p-p)(y).\]
Now, choose $p=\delta_y$. Since $h\cdot \delta_y=\delta_{hy}$, one actually has
\[\sum_{h\in F}(f_h(hy)-f_h(y))\geq c=c\cdot 1_Y.\]
On the other hand, because $f_h(h y)=(h^{-1} \cdot f_h)(y)$, one has 
\[\mu(y\mapsto \sum_{h\in F}f_h(hy)-f_h(y))=0.\]
This implies that $c=0$ by the positivity of $\mu$, which is a contradiction to that $c>0$. Thus, $0\in \overline{C}$ has to hold and the $p$ in need could be chosen.

Taking pointwise square roots gives unit vectors $\xi=\sqrt{p}\in\ell^2(Y)$ with
\[
 \|h\cdot\xi-\xi\|_2^2\leq\|h\cdot p-p\|_1\leq \varepsilon.
\]
Property~\textup{(T)} then provides a nonzero invariant vector $\eta$ in $\ell^2(Y)$, i.e. $h\cdot \eta=\eta$ for all $h\in H$. This implies that $\eta(h^{-1}x)=\eta(x)$ for $h\in H$, i.e. $\eta$ is constant on each orbit, and any orbit on which that constant is nonzero must be finite. This contradicts the definition of $Y$.
\end{proof}

\section{The dilation extension}

For $v\in L^3$ and $A\in Q$, write
\[
 \tau_v=\begin{pmatrix}I_3&v\\0&1\end{pmatrix},\qquad
 d(A)=\begin{pmatrix}A&0\\0&1\end{pmatrix},\qquad
 s=\begin{pmatrix}tI_3&0\\0&1\end{pmatrix}.
\]
Every element of $G$ has the form $\tau_vd(A)s^k$. Matrix multiplication gives
\begin{equation}\label{eq:relations}
 s\tau_vs^{-1}=\tau_{tv},\qquad
 d(A)\tau_vd(A)^{-1}=\tau_{Av},\qquad
 sd(A)=d(A)s.
\end{equation}
These relations verify closure of \eqref{eq:G} under products and inverses. 
The exponent $k$ is unique in the form $\tau_vd(A)s^k$, since the determinant of the upper-left block is $t^{3k}$. This allows to define a map
\[\pi:\begin{pmatrix}t^kA&v\\0&1\end{pmatrix}\mapsto k,
\] 
which can be verified as a well-defined surjective homomorphism with the kernel $\Gamma$. Therefore, one obtains a split exact sequence
\begin{equation}\label{eq:extension}
 1\longrightarrow\Gamma\longrightarrow G\longrightarrow\Z\longrightarrow1,
 \qquad G\cong\Gamma\rtimes\langle s\rangle.
\end{equation}

Denote by $e_1=(1, 0, 0)^T$, $e_2=(0, 1,0)^T$, and $e_3=(0, 0, 1)^T$ in $L^3$ for simplicity. 

\begin{proposition}\label{prop:fg}
The group $G$ is finitely generated.
\end{proposition}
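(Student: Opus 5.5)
The plan is to exhibit an explicit finite generating set, namely
\[
 S=\{s\}\cup\{\tau_{e_1}\}\cup\{d(A):A\in S_Q\},
\]
where $S_Q$ is a finite generating set of $Q=\SL_3(R)$, and to show that the subgroup $H=\langle S\rangle$ is all of $G$. Since every element of $G$ has the form $\tau_vd(A)s^k$, it suffices to show that $H$ contains all $d(A)$ with $A\in Q$ and all $\tau_v$ with $v\in L^3$.

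First, $Q$ must be finitely generated, and I will take this from input~\ref{input:T}. Property~(T) groups are finitely generated (a standard fact; see \cite{BHV08}), and $Q$ is countable. Concretely, one could instead invoke the fact that $\SL_3(\F_2[t^{-1}])$ is generated by elementary matrices (the Euclidean ring $R$ gives $\SL_3=E_3$). Together with the Steinberg relation $[E_{ij}(a),E_{jk}(b)]=E_{ik}(ab)$, this reduces the generators to $E_{ij}(1)$ and $E_{ij}(t^{-1})$, which is a finite set. Either route yields a finite $S_Q$, and then $d(Q)\subseteq H$.

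Next, I obtain the translations. By \eqref{eq:relations}, $d(A)\tau_{e_1}d(A)^{-1}=\tau_{Ae_1}$. Permutation-type matrices in $Q$ (with signs irrelevant in characteristic $2$) send $e_1$ to $e_2$ and $e_3$, so every $\tau_{e_i}$ lies in $H$. Applying the elementary matrix $E_{i1}(t^{-n})$ to $e_1$ gives $e_1+t^{-n}e_i$, so $\tau_{t^{-n}e_i}\in H$ for all $n\ge0$. Conjugating by powers of $s$ gives $s^k\tau_vs^{-k}=\tau_{t^kv}$, so $\tau_{t^m e_i}\in H$ for all $m\in\Z$. Since $v\mapsto\tau_v$ is a homomorphism from $(L^3,+)$ and the elements $t^me_i$ span $L^3$ additively over $\F_2$, all $\tau_v$ lie in $H$.

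The only real obstacle is justifying the finite generation of $Q$. Everything else is the routine conjugation computation above. I expect to cite property~(T) implies finite generation, with the elementary-matrix argument as a backup. Note that, once $s$ is available, the monomials $t^{-n}$ are not even needed from $Q$: $e_1$ together with the permutations and $s^{\pm1}$ already suffices.
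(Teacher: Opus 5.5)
Your proposal is correct and follows essentially the paper's route. You take a finite generating set for $Q$ (elementary matrices $x_{ij}(1),x_{ij}(t^{-1})$, using that $R$ is Euclidean and the commutator relations), adjoin $s$ and translations, and use $s\tau_v s^{-1}=\tau_{tv}$ to reach every $\tau_{t^k e_j}$. The only differences are cosmetic: you adjoin the single translation $\tau_{e_1}$ and move it to $\tau_{e_2},\tau_{e_3}$ with permutation matrices in $Q$, whereas the paper adjoins all three, and you also offer ``property (T) implies finite generation'' as an alternative justification for $Q$.
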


\begin{proof}
Let $E_{ij}$ denote the matrix unit and put $x_{ij}(r)=I_3+rE_{ij}$ for $i\ne j$. The twelve matrices
\[
 \{x_{ij}(1),x_{ij}(t^{-1}):1\leq i,j\leq3,\ i\ne j\}
\]
generate $Q$. Indeed, for distinct indices $i,j,\ell$, the identities
\[
 [x_{i\ell}(a),x_{\ell j}(b)]=x_{ij}(ab),\qquad
 x_{ij}(a)x_{ij}(b)=x_{ij}(a+b)
\]
produce every $x_{ij}(r)$ with $r\in R$; elementary matrices generate $\SL_3(R)$ because $R$ is a Euclidean domain.

Adjoin $s$ and $\tau_{e_1},\tau_{e_2},\tau_{e_3}$ to the embedded generators of $Q$. By \eqref{eq:relations}, the resulting subgroup contains every translation $\tau_{t^ke_j}$, for $k\in\Z$. Addition of vectors then produces all of $L^3$. These sixteen elements therefore generate $G$.
\end{proof}

\begin{remark}\label{rem:not-fg}
The original group $\Gamma$ is the increasing union
\[
 \Gamma=\bigcup_{N\geq0}\bigl(t^NR^3\rtimes Q\bigr).
\]
Each term is a subgroup, because both $A$ and $A^{-1}$ preserve $t^NR^3$ for $A\in Q$, and each is proper: it omits $\tau_{t^{N+1}e_1}$. Every finite subset of $\Gamma$ lies in one term, so $\Gamma$ is not finitely generated. The generator $s$ removes this obstruction.
\end{remark}

\section{Non-inner-amenability}

We identify $Q$ with $d(Q)\leq G$ and compute the elements having finite orbit under conjugation by $Q$.

\begin{lemma}\label{lem:centralizer}
One has
\[
 \FC_G(Q):=\{g\in G:|\{qgq^{-1}:q\in Q\}|<\infty\}
 =\langle s\rangle.
\]
\end{lemma}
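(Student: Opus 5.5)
We prove the two inclusions separately. The inclusion $\langle s\rangle\subseteq \FC_G(Q)$ is immediate from \eqref{eq:relations}: $s$ commutes with every $d(A)$, so each $s^k$ is fixed by conjugation by $Q$.

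For the reverse inclusion, write an arbitrary $g\in G$ as $g=\tau_v d(B) s^k$ with $v\in L^3$, $B\in Q$, $k\in\Z$. By \eqref{eq:relations}, conjugation by $A\in Q$ gives
\[
 d(A)\,g\,d(A)^{-1}=\tau_{Av}\,d(ABA^{-1})\,s^k .
\]
The decomposition $\tau_w d(C)s^k$ is unique: $k$ is determined by the determinant of the upper-left block, after which $C$ is determined by that block and $w$ by the last column. Hence the $Q$-conjugacy orbit of $g$ is finite if and only if the set of pairs $\{(Av,ABA^{-1}):A\in Q\}$ is finite. In particular this requires both that the orbit $Qv\subseteq L^3$ is finite and that the conjugacy class of $B$ in $Q$ is finite. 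The proof therefore reduces to two claims: (a) the only $v\in L^3$ with finite $Q$-orbit is $v=0$, and (b) $Q$ has no nontrivial finite conjugacy classes, i.e.\ its FC-center is trivial.

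For (a), suppose $v\neq0$, so some coordinate $v_j\neq0$. Choose $i\neq j$ and apply the elementary matrices $x_{ij}(r)$ with $r\in R$. These send $v$ to $v+rv_je_i$. Since $R$ is infinite and $L$ is a domain, the elements $rv_j$ are pairwise distinct, so the orbit $Qv$ is infinite.

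For (b), suppose $B$ has a finite conjugacy class. Then its centralizer $C_Q(B)$ has finite index in $Q$. In particular, for each pair $i\neq j$, some power of each unipotent subgroup $\{x_{ij}(r)\}$ lies in $C_Q(B)$. I expect this step to need the most care, because the subgroups $\{x_{ij}(r): r\in R\}\cong(R,+)$ are torsion of exponent 2, so finite index has to be used in the form of finite-index subgroups rather than powers. The argument runs as follows. The intersection $C_Q(B)\cap\{x_{ij}(r)\}$ is a finite-index subgroup of the additive group $R$, and it is infinite, so it contains some nonzero $r_{ij}$. Then $B$ commutes with $I+r_{ij}E_{ij}$, so $B$ commutes with $E_{ij}$ itself, since $r_{ij}$ is a nonzero scalar in the field $K$. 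Letting $(i,j)$ range over all off-diagonal pairs shows that $B$ commutes with every $E_{ij}$, and therefore $B$ is a scalar matrix. The scalars in $\SL_3(R)$ are $\lambda I$ with $\lambda\in R^\times=\F_2^\times=\{1\}$ and $\lambda^3=1$, so $B=I$.

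Combining (a) and (b) gives $v=0$ and $B=I$, so $g=s^k$. This proves $\FC_G(Q)\subseteq\langle s\rangle$, and together with the first inclusion, $\FC_G(Q)=\langle s\rangle$.
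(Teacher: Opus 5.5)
Your proof is correct and follows essentially the same route as the paper. The finite-index stabilizer meets each root subgroup $U_{ij}\cong(R,+)$ in a nonzero element $x_{ij}(r_{ij})$, which forces the block to commute with every $E_{ij}$ and hence to be scalar, and $R^\times=\{1\}$ finishes. The only difference is minor: you first split $g=\tau_v d(B)s^k$ and dispose of $v$ by exhibiting the infinite orbit $v+Rv_je_i$, whereas the paper reads off $E_{ij}v=0$ from the same commutation relation that makes $M$ scalar.
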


\begin{proof}
The inclusion $\langle s\rangle\subseteq\FC_G(Q)$ follows from \eqref{eq:relations}. Conversely, let
\[
 g=\begin{pmatrix}M&v\\0&1\end{pmatrix},\qquad M=t^kA,
\]
have finite $Q$-orbit. Then the stabilizer group $C_Q(g)$ for the conjugation action $Q\curvearrowright G$ has finite index in $Q$. For each $i\ne j$, the subgroup
\[
 U_{ij}=\{x_{ij}(r):r\in R\}
\]
is infinite, and $C_Q(g)\cap U_{ij}$ has finite index in $U_{ij}$ and therefore $C_Q(g)\cap U_{ij}$ is infinite. Thus it contains $x_{ij}(r_{ij})$ for some nonzero $r_{ij}\in R$ and thus $d(x_{ij}(r_{ij}))$ commutes with $g$. The explicit calculation implies that
\[r_{ij}E_{ij}M=r_{ij}ME_{ij}, \qquad r_{ij}E_{ij}v=0 \qquad (i\ne j).\]
Since we are working in the field $K$ and $r_{ij}\neq 0$, one actually has
\[
 E_{ij}M=ME_{ij},\qquad E_{ij}v=0\qquad(i\ne j)
\]
by dividing $r_{ij}$. The further calculation implies that $M=\lambda I_3$ for some $\lambda\in K^{\times}$ and $v=0$. Then because $M=t^kA$, one has $A=t^{-k}M=cI_3\in \operatorname{SL}_3(R)$.
The condition $\det(A)=1$ gives $c^3=1$, so $c\in R^\times=\{1\}$. Therefore $g=s^k$.
\end{proof}

\Needspace{15\baselineskip}
\begin{proposition}\label{prop:not-inner}
Every conjugation-invariant mean on $G$ is the point mass at the identity. In particular, $G$ is not inner amenable.
\end{proposition}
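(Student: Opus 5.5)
Let $m$ be a conjugation-invariant mean on $G$. The plan is to first push $m$ onto $\langle s\rangle$ using property (T) of $Q$, and then rule out any mass on $\langle s\rangle\setminus\{1\}$ using conjugation by translations.

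First, restrict attention to the conjugation action of $Q=d(Q)\leq G$ on the set $X=G$. Since $m$ is invariant under conjugation by all of $G$, it is in particular $Q$-invariant. By input~\ref{input:T}, $Q$ has property~(T), so Lemma~\ref{lem:T} applies and gives $m(X_{\mathrm{fin}})=1$. Here $X_{\mathrm{fin}}=\FC_G(Q)$, and Lemma~\ref{lem:centralizer} identifies this set as $\langle s\rangle$. Hence $m(\langle s\rangle)=1$.

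Second, we show that $m(\{s^k\})=0$ for each $k\neq 0$, and more strongly that $m(\langle s\rangle\setminus\{1\})=0$. Conjugate by a translation $\tau_v$. Using \eqref{eq:relations}, we get $\tau_v s^k\tau_v^{-1}=\tau_v\tau_{-t^kv}s^k=\tau_{(1-t^k)v}s^k$. For $k\neq0$, the factor $1-t^k$ is nonzero in $K$, so for $v\ne0$ this element is not in $\langle s\rangle$. Take $v=e_1$, so $g=\tau_{e_1}$. Then $g(\langle s\rangle\setminus\{1\})g^{-1}$ is disjoint from $\langle s\rangle$. By invariance, $m(g(\langle s\rangle\setminus\{1\})g^{-1})=m(\langle s\rangle\setminus\{1\})$. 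This conjugated set lies in $G\setminus\langle s\rangle$, which has measure $0$ by the first step. Therefore $m(\langle s\rangle\setminus\{1\})=0$, and so $m(\{1\})=1$. A mean with $m(\{1\})=1$ is the point mass $\delta_1$, because for $f\in\ell^\infty$, positivity gives $|m(f)-f(1)|=|m(f\one_{G\setminus\{1\}})|\le\|f\|_\infty\, m(G\setminus\{1\})=0$.

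Consequently no conjugation-invariant mean is atomless, so $G$ is not inner amenable. Given the earlier lemmas, I expect no real obstacle. The only point needing care is the observation that a single conjugation moves the entire set $\langle s\rangle\setminus\{1\}$ off $\langle s\rangle$, which follows from $1-t^k\neq0$ for $k\neq0$.
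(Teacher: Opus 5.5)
Your proposal is correct and follows the paper's proof. Property~(T) of $Q$, together with Lemma~\ref{lem:T} and Lemma~\ref{lem:centralizer}, gives $m(\langle s\rangle)=1$, and conjugating by $\tau_{e_1}$ via $\tau_{e_1}s^k\tau_{e_1}^{-1}=\tau_{(1-t^k)e_1}s^k$ then puts all the mass on the identity. The differences are cosmetic: you use disjointness of $\tau_{e_1}(\langle s\rangle\setminus\{1\})\tau_{e_1}^{-1}$ from $\langle s\rangle$ where the paper uses inclusion--exclusion with $S\cap\tau S\tau^{-1}=\{1\}$, and you spell out why $m(\{1\})=1$ forces $m$ to be the point mass at $1$.
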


\begin{proof}
Let $m$ be a conjugation-invariant mean on $G$ and put $S=\langle s\rangle$. Recall that $Q=\SL_3(\F_2[t^{-1}])$ has Kazhdan's property~\textup{(T)} by ~\ref{input:T}. Applying Lemma~\ref{lem:T} to $Q\curvearrowright G$ by conjugation,  Lemma~\ref{lem:centralizer} implies $m(S)=1$.

Take $\tau=\tau_{e_1}$. Direct multiplication gives
\[
 \tau s^k\tau^{-1}
 =\begin{pmatrix}t^kI_3&(1-t^k)e_1\\0&1\end{pmatrix}.
\]
Since $t$ is an indeterminate, $(1-t^k)e_1=0$ if and only if $k=0$. Thus
\[
 S\cap\tau S\tau^{-1}=\{1\}.
\]
Both sets on the left have $m$-measure one. This implies that
\[2=m(S)+m(\tau S\tau^{-1})=m(S\sqcup \tau S\tau^{-1})+m(S\cap \tau S\tau^{-1}).\]
This implies that $m(\{1\})\geq 1$ and thus necessarily one has $m(\{1\})=1$. So $m$ is the point mass at the identity and cannot be atomless.
\end{proof}

\begin{proposition}\label{prop: ICC}
    The group $G$ is ICC.
\end{proposition}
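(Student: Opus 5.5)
We prove that every nontrivial $g\in G$ has infinite conjugacy class in $G$. The plan is to split according to whether $g$ lies in $\langle s\rangle$, using Lemma~\ref{lem:centralizer} for one case and a direct computation for the other.

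\emph{Case 1: $g\notin\langle s\rangle$.} By Lemma~\ref{lem:centralizer}, $g\notin\FC_G(Q)$. Hence the $Q$-orbit $\{qgq^{-1}:q\in Q\}$ is already infinite. This orbit is contained in the $G$-conjugacy class of $g$, so that class is infinite.

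\emph{Case 2: $g=s^k$ with $k\neq 0$.} We conjugate $s^k$ by translations $\tau_v$, $v\in L^3$. The same multiplication as in the proof of Proposition~\ref{prop:not-inner} gives
\[
 \tau_v s^k\tau_v^{-1}=\begin{pmatrix}t^kI_3&(1-t^k)v\\0&1\end{pmatrix}.
\]
Since $k\neq0$, the scalar $1-t^k$ is a nonzero element of the field $K$. Therefore the map $v\mapsto(1-t^k)v$ is injective on the infinite set $L^3$. This produces infinitely many distinct conjugates of $s^k$.

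Together, the two cases exhaust all $g\ne 1$, so $G$ is ICC. There is no serious obstacle here. The only point requiring care is that Lemma~\ref{lem:centralizer} computes the finite-orbit elements only under $Q$-conjugation, not under all of $G$. That lemma leaves exactly the powers of $s$ uncovered, and Case 2 handles them separately using the translation subgroup.
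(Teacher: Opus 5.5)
Your proof is correct, but it is organized differently from the paper's. The paper derives ICC as a one-line corollary of Proposition~\ref{prop:not-inner}. If some $g\neq 1$ had a finite conjugacy class $C$, the uniform average over $C$ would be a conjugation-invariant mean other than the point mass at the identity, which that proposition rules out.

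You argue directly instead. A finite $G$-conjugacy class forces a finite $Q$-orbit, so Lemma~\ref{lem:centralizer} places $g$ in $\langle s\rangle$. The translations $\tau_v$ then carry $s^k$ with $k\neq 0$ to infinitely many distinct elements, whose translation parts are $(1-t^k)v$. In effect, you run the proof of Proposition~\ref{prop:not-inner} only for finitely supported means, where the step supplied by Lemma~\ref{lem:T} becomes trivial.

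What your route buys is independence from heavier inputs. It uses only the purely algebraic Lemma~\ref{lem:centralizer} (orbit--stabilizer plus a commutation computation) and one matrix product. So ICC does not depend on property~(T) of $\SL_3(\F_2[t^{-1}])$ or on any mean-theoretic machinery. The paper's route is shorter on the page because it reuses the stronger statement that every conjugation-invariant mean on $G$ is $\delta_1$, which it needs anyway for non-inner-amenability.
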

\begin{proof}
    Suppose there exists a $g\in G\setminus\{1_G\}$ whose conjugacy class $C$ is finite. Then define a conjugation-invariant mean $m_C$ by
    \[m_C(f)=(1/|C|)\sum_{x\in C}f(x).\]
    Note that $1_G\notin C$. Therefore, $m_C$ is not the point mass at the identity. This is a contradiction to Proposition \ref{prop:not-inner}.
\end{proof}

We are now able to prove Theorem \ref{thm:main}.

\begin{proof}(Theorem \ref{thm:main})
  Finite generation, non-inner-amenability, and the property of ICC, for $G$ have been established in Propositions \ref{prop:fg}, \ref{prop:not-inner} and \ref{prop: ICC}. Then note that the normal subgroup $\Gamma$ is co-amenable in $G$ because $G/\Gamma$ is isomorphic to $\Z$. Therefore, $G$ is not properly proximal by \ref{input:coamenable}. Moreover, since $G$ has a quotient isomorphic to $\Z$, it is not of property \textup{(T)}.
\end{proof}

\section{Acknowledgement}
The group presented in the note is found by ChatGPT 6 Astra on September 21. The author have checked and rewritten the proofs generated by the AI and take the full responsibility for this final version. 

The author would like to thank helpful comments by Changying Ding and thank Ning Ma for the communications on linear groups.

\end{document}